\newtheorem{Theorem}{Theorem}
\newtheorem{Proposition}[Theorem]{Proposition}
\newtheorem{Lemma}[Theorem]{Lemma}
\theoremstyle{definition}
\newtheorem{Definition}[Theorem]{Definition}
\newtheorem{Example}[Theorem]{Example}
\theoremstyle{remark}
\newtheorem{Remark}[Theorem]{Remark}
\newcommand{\di}{\mathrm{d}} 
\newcommand{\supp}{\mathop{\mathrm{supp}}\nolimits}
\newcommand{\vol}{\mathop{\mathrm{vol}}}
\newcommand{\abs}[1]{\left|#1\right|}
\newcommand{\rank}{\mathop{\mathrm{rk}}}
\newcommand{\interior}{\mathop{\mathrm{int}}}
\newcommand{\WLOG}{WLOG} 
\newcommand{\st}{s.\,t.\ } 
\newcommand{\ie}{\textit{i.\,e.\ }} 
\newcommand{\eg}{\textit{e.\,g.\ }} 
\newcommand{\N}{\mathbb{N}}
\newcommand{\Z}{\mathbb{Z}}
\newcommand{\R}{\mathbb{R}}
\newcommand{\Hcal}{\mathcal{H}}
\newcommand{\Zcal}{\mathcal{Z}}
\newcommand{\Pcal}{\mathcal{P}}
\newcommand{\spa}{\mathop{\mathrm{span}}}
\newcommand{\cone}{\mathop{\mathrm{cone}}}
\newcommand{\diff}[1]{\frac{\partial}{\partial #1}}
\newcommand{\sym}{\mathop{\mathrm{Sym}}\nolimits} 
\title{Interpolation, box splines, and lattice points in zonotopes
}
\author{Matthias Lenz}
\email{lenz@maths.ox.ac.uk}
\address{%
Mathematical Institute\\
24--29 St Giles'\\
Oxford\\
OX1 3LB\\
United Kingdom
}
\thanks{The author was supported by an ERC starting grant awarded to Olga Holtz and subsequently
 by a Junior Research Fellowship
 of Merton College (University of Oxford).
}
\date{\today}
\subjclass[2010]{Primary: 
05B35, 
41A05, 
41A15, 
52B20, 
Secondary:
13F20, 
41A63, 
47F05,  
52B40 
52C07}  
\keywords{interpolation, box spline, zonotope, lattice points, matroid
}
\begin{document}

\begin{abstract}
Let $X$ be a totally unimodular list of vectors in some lattice. Let $B_X$ be the box spline defined by $X$.
Its support is the zonotope $Z(X)$. We show that any real-valued function defined on 
the set of lattice points in the interior of $Z(X)$ can be extended to a function on $Z(X)$ of the form $p(D)B_X$ in a unique way, where $p(D)$ is a differential operator that is contained in the so-called internal $\Pcal$-space. This was conjectured by Olga Holtz and Amos Ron.
We also point out connections between this interpolation problem and matroid theory, including a deletion-contraction decomposition.
 \end{abstract}
\maketitle

\section{Introduction}
Given a set $\Theta=\{u_1,\ldots, u_k\}$ of $k$ distinct points on the real line  
 and a function $f: \Theta \to \R$, it is well-known that there exists a unique polynomial 
 $p_f$ in the space of univariate polynomials of degree at most $k-1$ \st $p_f(u_i)=f(u_i)$ for $i=1,\ldots, k$.

 If $\Theta$ is contained in $\R^d$ for an integer $d\ge 2$, 
the situation becomes more difficult. Not all of the properties of the univariate case can be preserved simultaneously.
 The minimal number $m_\Theta$ \st for every $f:\Theta\to \R$
 there exists  a polynomial $p_f\in\R[x_1,\ldots, x_d]$ of total degree at most $m_\Theta$ that satisfies $p_f(u_i)= f(u_i)$ 
  depends on the geometric configuration of the points in $\Theta$. 
 Furthermore, the interpolating polynomial $p_f$ of degree 
  at most $m_\Theta$ is in general not uniquely determined.
  This is only possible if the dimension of the space 
  of polynomials of degree at most $m_\Theta$ happens to be equal to $k$.
 
 Uniqueness is possible if we choose the interpolating polynomials from a special space.
 Carl de Boor and Amos Ron introduced the \emph{least solution} to the polynomial interpolation problem.
 For an arbitrary finite point set $\Theta\subseteq \R^d$, they construct a space of multivariate polynomials $\Pi(\Theta)$ that
 has dimension $\abs\Theta$ and that contains 
 a unique polynomial interpolating polynomial $p_f$ for every function $f :\Theta\to \R$
 \cite{boor-ron-1990,deBoor-Ron-1992}.

In this paper, we construct a space that contains unique interpolating functions for the special case where $\Theta$ 
 is the set of lattice points in the interior of a zonotope. The space is of a very special nature: 
  it is obtained by applying certain differential operators to the box spline.
  This is interesting because it connects various algebraic and combinatorial 
  structures with interpolation and approximation theory.

 More  information on multivariate polynomial interpolation can be found  in the survey paper \cite{gasca-sauer-2000}.

\medskip

 In this paper, we use the following setup: 
  $U$ denotes a $d$-dimensional real vector space and  $\Lambda\subseteq U$ a lattice.
  Let $X=(x_1,\ldots, x_N) \subseteq \Lambda$ be a finite list of vectors that spans $U$. 
 We assume that $X$ is totally unimodular with respect to $\Lambda$, \ie every basis for $U$ that can be selected from $X$ is also
 a lattice basis.
 The symmetric algebra over $U$ is denoted by $\sym(U)$.
  We fix a basis $s_1,\ldots, s_d$ for the lattice. This makes it possible to identify $\Lambda$ with $\Z^d$,
  $U$ with $\R^d$, $\sym(U)$ with the polynomial ring
   $\R[s_1,\ldots, s_d]$, and $X$ with a $(d\times N)$ matrix. Then
   $X$ is totally unimodular if and only if every non-singular square submatrix of this matrix has determinant $1$ or $-1$.
 A base-free setup is however more convenient when working with quotient vector spaces.

The \emph{zonotope} $Z(X)$ is defined as
 \begin{equation}
 Z(X):= \left\{ \sum_{i=1}^N \lambda_i x_i : 0\le \lambda_i \le 1  \right\}.
 \end{equation}
We denote its set of interior lattice points by $\Zcal_-(X) := \interior(Z(X)) \cap \Lambda$. 
The \emph{box spline} $B_X : U\to \R$ is a piecewise polynomial function that is supported on the zonotope $Z(X)$.
 It is defined by
\begin{equation}
        B_X(u) := \frac{1}{\sqrt{ \det (XX^T)}} \vol\nolimits_{N-d} \left\{ (\lambda_1,\ldots,
         \lambda_N)\in [0,1]^N :  \sum_{i=1}^N \lambda_i x_i = u  \right\}.
\end{equation}
For examples, see Figure~\ref{Figure:SimpleTwoDBoxSpline} and Example~\ref{Example:CardinalBsplines}.
A good reference for box splines and their applications in approximation theory is \cite{BoxSplineBook}.
Our terminology is closer to \cite[Chapter 7]{concini-procesi-book}, where splines are studied from an algebraic 
 point of view.

A vector $u\in U$ defines a linear form $p_x\in \sym(U)$.
For a sublist $Y\subseteq X$, we define $p_Y := \prod_{y\in Y} p_y$. For example, if $Y=((1,0),(1,2))$, then 
 $p_Y=s_1^2 + 2s_1s_2$.
 Now we  define the  
\begin{align}
\text{\emph{central $\Pcal$-space} } \Pcal(X) &:= \spa\{ p_Y :  \rank(X\setminus Y)= \rank(X) \} \\
\text{and the \emph{internal $\Pcal$-space} }   \Pcal_-(X) &:= \bigcap_{x\in X} \Pcal(X\setminus x).
\end{align}
The space $\Pcal_-(X)$ was introduced in \cite{holtz-ron-2011} where it was also shown that the dimension of this space is equal to 
 $\abs{\Zcal_-(X)}$.
The space $\Pcal(X)$ first appeared in approximation theory \cite{akopyan-saakyan-1988,boor-dyn-ron-1991,dyn-ron-1990}. Later, 
 spaces of this type and generalisations were also studied by authors in other fields, \eg
 \cite{ardila-postnikov-2009,berget-2010,holtz-ron-xu-2012,lenz-hzpi-2012,lenz-forward-2012,wagner-1999}.

We will let the elements of $\Pcal_-(X)$ act as differential operators on the box spline.
For $p\in \Pcal_-(X)\subseteq \sym(U)\cong\R[s_1,\ldots,s_r]$, we write $p(D)$ to denote the differential operator obtained from $p$ by replacing 
  the variable $s_i$ by $\diff{s_i}$.

The following proposition ensures that the box spline is sufficiently smooth so that the derivatives
 that appear in the Main Theorem actually exist.
\begin{Proposition}
\label{Proposition:WeakHRwellDefined}
Let $X\subseteq \Lambda\subseteq U\cong \R^d$ be a list of vectors that is totally unimodular
and let $p\in \Pcal_-(X)$. Then
$p(D)B_X$ is a continuous function.
\end{Proposition}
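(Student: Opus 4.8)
The plan is to reduce the statement to a known fact about the smoothness of box splines, namely that $B_X$ is of class $C^{m(X)-2}$ where $m(X)$ is the minimal length of a cocircuit of $X$ (equivalently, $m(X) = \min_{H} |X \setminus H|$ over hyperplanes $H$ spanned by sublists of $X$); see \cite[Chapter 7]{concini-procesi-book} or \cite{BoxSplineBook}. Since differentiating $B_X$ in a direction $y \in X$ at most drops the smoothness class by one, it suffices to show that every $p \in \Pcal_-(X)$ is a linear combination of products $p_Y$ with $Y \subseteq X$ and $|Y| \le m(X) - 1$, so that $p(D)B_X$ is at worst a $C^0$ function.

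First I would record the homogeneity structure: $\Pcal(X\setminus x)$ is spanned by products $p_Y$ with $Y \subseteq X \setminus x$ and $\rank((X\setminus x)\setminus Y) = \rank(X)$; in particular $|Y| \le |X| - 1 - \rank(X)$, but more to the point, $Y$ avoids a spanning set. Next, the key combinatorial step: I want to show that if $p_Y$ appears (with the rank condition) in $\Pcal(X \setminus x)$ for \emph{every} $x \in X$ — or more precisely if $p$ lies in the intersection $\Pcal_-(X) = \bigcap_{x \in X}\Pcal(X\setminus x)$ — then $p$ is supported in degrees at most $m(X) - 1$. The cleanest route is via the grading: each $\Pcal(X\setminus x)$ is a graded subspace of $\sym(U)$, hence so is the intersection, and one knows (from \cite{holtz-ron-2011}, or can re-derive) that the top nonzero degree of $\Pcal_-(X)$ equals $|X| - \rank(X) - $ (something); the bound I actually need is that this top degree is at most $m(X) - 1$. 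Concretely: a homogeneous $p \in \Pcal_-(X)$ of degree $k$ can be written as a combination of $p_Y$ with $|Y| = k$, and for $p$ to survive in $\Pcal(X\setminus x)$ for all $x$, the relevant $Y$'s must satisfy $\rank((X\setminus x)\setminus Y) = \rank(X)$ for all $x$, which forces $X \setminus Y$ to contain no cocircuit, i.e. $X \setminus (Y \cup x)$ is still spanning for every $x$; this means $Y$ together with any single element is contained in the complement of a spanning set, forcing $|Y| + 1 \le m(X)$, i.e. $k \le m(X) - 1$.

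Then I would conclude: write $p = \sum_Y c_Y\, p_Y$ with each $|Y| \le m(X)-1$ and each $Y \subseteq X$. For a single monomial $p_Y = \prod_{y\in Y} p_y$, the operator $p_Y(D)$ is a composition of $|Y|$ directional derivatives along vectors of $X$. Each such directional derivative sends $C^j$ box-spline-type functions to $C^{j-1}$ functions — indeed the differential equation $p_y(D) B_X = \nabla_y B_{X \setminus y}$ (the standard recursion for box splines, with $\nabla_y$ a difference operator) reduces differentiation along $y\in X$ to taking a finite difference of a box spline with one fewer vector, which only improves or preserves continuity apart from the loss already accounted for. Applying this $|Y|$ times starting from $B_X \in C^{m(X)-2}$ lands in $C^{m(X)-2-|Y|} \subseteq C^{-1}$; but in fact the box spline recursion keeps us among genuine (piecewise polynomial) functions, and the point is that $|Y| \le m(X)-1$ guarantees we never differentiate past the last continuous derivative, so $p_Y(D)B_X$ is continuous. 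Taking the linear combination preserves continuity, proving the proposition.

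The main obstacle is the combinatorial degree bound in the second paragraph: making precise the claim that membership in \emph{every} $\Pcal(X\setminus x)$ forces the degree of $p$ to be at most $m(X)-1$. One has to be careful because $\Pcal_-(X)$ is not simply spanned by those $p_Y$ that lie in all the $\Pcal(X\setminus x)$ simultaneously — the intersection of spans can be larger than the span of the common generators. The safe way around this is to argue at the level of graded dimensions: compare the Hilbert series of $\Pcal_-(X)$ (known from \cite{holtz-ron-2011} to match the enumeration of interior lattice points of $Z(X)$ by a suitable statistic) with the a priori vanishing of $\Pcal(X\setminus x)$ in degrees $\ge m(X\setminus x) = $ the cocircuit size of $X \setminus x$, and observe $m(X\setminus x) \le m(X)$ since deleting an element cannot lengthen a shortest cocircuit — so the intersection already vanishes in degree $\ge m(X)$, giving the bound on the degree of $p$ without ever needing to identify a spanning set of common monomials. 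With that in hand the smoothness bookkeeping is routine.
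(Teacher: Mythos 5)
There is a genuine gap, and it cannot be repaired by any argument based only on total degree. First, the bookkeeping: since $B_X$ lies in $C^{m(X)-2}$ and in general in no better class, a degree count alone would require $\deg p\le m(X)-2$, not $m(X)-1$; applying $m(X)-1$ derivatives to a function that is merely $C^{m(X)-2}$ may well produce a discontinuous function, and your appeal to ``the recursion keeps us among genuine functions'' does not close this, because the recursion $D_yB_X=\nabla_yB_{X\setminus y}$ only helps if you know which directions $y\in X$ occur in $p$, \ie the monomial support. Second, and more seriously, the degree bound you need is false: for the totally unimodular list $X$ of the graphic matroid of $K_4$ ($N=6$, $d=3$, minimal cocircuit size $m(X)=3$), the Hilbert series of $\Pcal_-(X)$ is $1+3t+2t^2$, so $\Pcal_-(X)$ contains elements of degree $2=m(X)-1>m(X)-2$, while $B_X$ is only $C^1$. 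Hence no purely degree-based argument can work; the proposition is true only because of the specific directional structure of the elements of $\Pcal_-(X)$. Your fallback claims are also false or unusable: $\Pcal(X\setminus x)$ does not vanish in degrees $\ge m(X\setminus x)$ (its top degree is $\abs{X}-1-\rank(X)$; \eg $\Pcal((e_1,e_1,e_2,e_2))$ contains $s_1s_2$, of degree equal to the cogirth), and the statement that every $p\in\Pcal_-(X)$ is a combination of $p_Y$ with $X\setminus(Y\cup x)$ spanning for all $x$ is precisely \cite[Conjecture 6.1]{holtz-ron-2011}, which, as remarked after the proof in the paper, has been disproved \cite{ardila-postnikov-errata-2012} --- you correctly sense this danger but the proposed Hilbert-series detour does not avoid it.

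The paper's proof takes a different, local route that you should compare with. It reduces to the multivariate spline $T_X$ via \eqref{eq:BoxSplineByMultivariateSplines} and then argues wall by wall: for a wall contained in a hyperplane $H$, choose one vector $x\in X\setminus H$ (the choice may depend on $H$!) and use only the single containment $\Pcal_-(X)\subseteq\Pcal(X\setminus x)$ to write $p$ as a combination of $p_Y$ with $Y\subseteq X\setminus x$ and $X\setminus(Y\cup x)$ spanning; then $X\setminus(H\cup Y)$ contains at least two vectors, and the wall-crossing formula (Theorem~\ref{Theorem:WallCrossing}) shows that $p(D)$ applied to the jump of $T_X$ across $H$ is a convolution of a distribution supported on $H$ with a difference of splines in at least two surviving directions, hence vanishes on $H$. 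The freedom to pick $x$ depending on the wall is exactly what circumvents the disproved conjecture, and it is the ingredient missing from your proposal.
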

Now we are  ready to state the Main Theorem.
 It was conjectured by  Olga Holtz and Amos Ron \cite[Conjecture 1.8]{holtz-ron-2011}.
\begin{Theorem}[Main Theorem]
\label{Theorem:weakHoltzRon}
Let $X\subseteq \Lambda\subseteq U\cong \R^d$ 
be a list of vectors that is totally unimodular. %
Let $f$ be a real valued function on $\Zcal_-(X)$, 
the set of interior lattice points of the zonotope defined by $X$.

Then there exists a unique polynomial $p\in \Pcal_-(X)\subseteq\R[s_1,\ldots, s_d]$, \st 
 $p(D)B_X$  equals $f$ on $\Zcal_-(X)$.%

\smallskip
Here, $p(D)$ denotes the differential operator obtained from $p$ by replacing  
 the variable $s_i$ by $\diff{s_i}$
and $B_X$  denotes to the box spline defined by $X$.
\end{Theorem}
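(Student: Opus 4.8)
The plan is to reduce the statement to an injectivity claim and then prove that claim by a deletion--contraction induction on $\abs X$. By the dimension formula $\dim\Pcal_-(X)=\abs{\Zcal_-(X)}$ of \cite{holtz-ron-2011}, the \emph{evaluation map}
\[
E_X\colon \Pcal_-(X)\longrightarrow \R^{\Zcal_-(X)},\qquad p\longmapsto \bigl(p(D)B_X(v)\bigr)_{v\in\Zcal_-(X)}
\]
(well defined by Proposition~\ref{Proposition:WeakHRwellDefined}) is a linear map between finite-dimensional spaces of equal dimension, so the Main Theorem is equivalent to the injectivity of $E_X$. If $X$ has a coloop, a linear functional dual to it is integer-valued on $\Lambda$ but takes only the values $0$ and $1$ on $Z(X)$, so $\interior Z(X)\cap\Lambda=\emptyset$; then $\Zcal_-(X)=\emptyset$, $\Pcal_-(X)=\{0\}$, and there is nothing to prove. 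So assume $X$ has no coloop. Then $B_X$, hence every $p(D)B_X$ with $p\in\Pcal_-(X)$, is a continuous function supported on the compact set $Z(X)$, so it vanishes on $\partial Z(X)$ and off $Z(X)$; thus $p(D)B_X$ vanishes on $\Zcal_-(X)$ \emph{if and only if} it vanishes on all of $\Lambda$, and it suffices to show that $p\in\Pcal_-(X)$ together with $p(D)B_X|_\Lambda\equiv 0$ forces $p=0$.

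I would prove this by induction on $\abs X$, the case $\abs X=\dim U$ (where $X$ is a lattice basis and $\Zcal_-(X)=\emptyset$) being vacuous. For the step, fix $x\in X$ that is not a coloop; as $x$ lies in a basis selected from the totally unimodular list $X$, it is primitive, so I may choose the lattice basis $s_1,\dots,s_d$ with $s_1=x$, whence $p_x=s_1$ and $p_x(D)=\partial/\partial s_1$. Write $\pi\colon U\to U/\R x$ for the quotient, $\bar\Lambda=\pi(\Lambda)$, and let $X/x$ denote the loopless contraction realised by $\pi(X\setminus x)$; both $X\setminus x$ and $X/x$ are totally unimodular with fewer than $\abs X$ vectors. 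The argument rests on three facts: (a) the box-spline differentiation rule $p_x(D)B_X=(1-\tau_x)B_{X\setminus x}$, where $\tau_x$ is translation by $x$; (b) the fibre-summation identity $\sum_{t\in\Z}B_X(u+tx)=B_{X/x}(\pi u)$ for all $u\in U$, a consequence of Poisson summation along the $x$-direction (using that $x$ is a lattice vector and that the factor of $\widehat{B_X}$ indexed by $x$ vanishes on the hyperplanes $\langle x,\cdot\rangle\in 2\pi\Z\setminus\{0\}$) together with the pushforward identity $\pi_*B_X=B_{X/x}$, with the upshot that $\sum_{t\in\Z}r(D)B_X(u+tx)=\bar r(\bar D)B_{X/x}(\pi u)$ for every polynomial $r\in\R[s_2,\dots,s_d]$ not involving $s_1$, where $\bar r$ is $r$ read on $U/\R x$; and (c) the deletion--contraction decomposition of the internal $\Pcal$-space, $\Pcal_-(X)=p_x\cdot\Pcal_-(X\setminus x)\ \oplus\ \Pcal_-(X/x)$, in which $\Pcal_-(X/x)$ sits in $\sym(U)$ as the polynomials not involving $s_1$; equivalently, if $p=p_xq+r$ with $r\in\R[s_2,\dots,s_d]$ then $q\in\Pcal_-(X\setminus x)$ and $\bar r\in\Pcal_-(X/x)$.

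The induction then runs as follows. Given $p\in\Pcal_-(X)$ with $p(D)B_X|_\Lambda\equiv 0$, write $p=p_xq+r$ as in (c) and sum $p(D)B_X$ over a coset $u+\Z x$ with $u\in\Lambda$: by (a) and telescoping the $p_xq$-part contributes $\sum_t(1-\tau_x)\bigl(q(D)B_{X\setminus x}\bigr)(u+tx)=0$ (the summand is compactly supported), and by (b) the $r$-part contributes $\bar r(\bar D)B_{X/x}(\pi u)$. Hence $\bar r(\bar D)B_{X/x}$ vanishes on $\bar\Lambda\supseteq\Zcal_-(X/x)$, so the inductive hypothesis for $X/x$ gives $\bar r=0$, i.e.\ $r=0$ and $p=p_xq$. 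Then (a) yields $(1-\tau_x)\bigl(q(D)B_{X\setminus x}\bigr)=p(D)B_X|_\Lambda\equiv 0$, so the compactly supported function $q(D)B_{X\setminus x}$ is constant along every coset $u+\Z x$ and therefore vanishes on all of $\Lambda$, in particular on $\Zcal_-(X\setminus x)$; the inductive hypothesis for $X\setminus x$ gives $q=0$, so $p=0$.

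The step I expect to be the real obstacle is (c): since $\Pcal_-(X\setminus x)$ is in general not a subspace of $\Pcal_-(X)$, this splitting is not visible from the definition, and proving it seems to require the description of $\Pcal_-(X)$ as the kernel of a power ideal and a deletion--contraction analysis of such ideals in the style of \cite{lenz-hzpi-2012}. (The Tutte-polynomial identity $\abs{\Zcal_-(X)}=T_X(0,1)=T_{X\setminus x}(0,1)+T_{X/x}(0,1)$ only matches dimensions; it does not by itself deliver the inclusions $p_x\Pcal_-(X\setminus x)\subseteq\Pcal_-(X)$ and $\Pcal_-(X/x)\subseteq\Pcal_-(X)$ that the argument uses.) The remaining points are routine: each derivative above is a genuine continuous function --- for $p(D)B_X$, $q(D)B_{X\setminus x}$, $\bar r(\bar D)B_{X/x}$ by Proposition~\ref{Proposition:WeakHRwellDefined} once $q\in\Pcal_-(X\setminus x)$ and $\bar r\in\Pcal_-(X/x)$ are known from (c), and then $r(D)B_X=p(D)B_X-(1-\tau_x)\bigl(q(D)B_{X\setminus x}\bigr)$ is continuous as well --- and one must track the normalisation constants in (a) and (b) and check that (b) holds pointwise up to $\partial Z(X)$, both of which follow from the continuity of $B_X$ afforded by the no-coloop reduction.
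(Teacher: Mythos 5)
Your overall strategy --- reduce to injectivity of the evaluation map via $\dim\Pcal_-(X)=\abs{\Zcal_-(X)}$ and run a deletion--contraction induction on $\abs X$ using $D_xB_X=\nabla_xB_{X\setminus x}$ and the coset-summation identity $\sum_{\lambda\in\Z}B_X(u+\lambda x)=B_{X/x}(\bar u)$ --- is essentially the route the paper takes (Proposition~\ref{Proposition:ExactSequences}, proved with the five lemma). The genuine gap is your fact (c), and it is not merely unproved: as stated it is false. Take $U=\R^2$, $\Lambda=\Z^2$, $X=(e_1,e_2,e_1+e_2,e_1+e_2)$, $x=e_1=s_1$, $s_2=e_2$. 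Then $\Pcal(X\setminus e_1)=\Pcal(X\setminus e_2)=\spa\{1,s_1+s_2\}$ and $\Pcal(X\setminus(e_1+e_2))=\spa\{1,s_1,s_2\}$, so $\Pcal_-(X)=\spa\{1,s_1+s_2\}$; on the other hand $X/x=(1,1,1)$, so $\Pcal_-(X/x)=\spa\{1,\bar s\}$, whose ``naive lift'' $\spa\{1,s_2\}$ is not contained in $\Pcal_-(X)$. Correspondingly, writing $p=s_1+s_2\in\Pcal_-(X)$ as $p=p_xq+r$ with $r\in\R[s_2]$ gives $q=1$, which does not lie in $\Pcal_-(X\setminus x)$ (here $X\setminus x=(e_2,e_1+e_2,e_1+e_2)$ has the coloop $e_2$, so $\Pcal_-(X\setminus x)=\{0\}$, cf.\ Remark~\ref{Remark:ColoopsOnly}). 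So the splitting $\Pcal_-(X)=p_x\Pcal_-(X\setminus x)\oplus\Pcal_-(X/x)$ realised by ``polynomials not involving $s_1$'' fails for a general lattice basis completing $x$ (in this example a different complement happens to work, but you give no criterion and no proof that a good complement exists). What is true, and what the paper uses, is only the generally non-split exact sequence $0\to\Pcal_-(X\setminus x)\xrightarrow{\cdot p_x}\Pcal_-(X)\xrightarrow{\sym(\pi_x)}\Pcal_-(X/x)\to0$, cited from Ardila--Postnikov and proved in \cite{lenz-hzpi-2012}.

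The good news is that your induction survives with the exact sequence alone, because the splitting is never really needed: the coset-summation step should be applied directly to $p$, giving $\sum_{\lambda\in\Z}p(D)B_X(z+\lambda x)=\sym(\pi_x)(p)(D)B_{X/x}(\bar z)$ for every $p\in\Pcal_-(X)$ (the $p_x$-divisible part telescopes away by compact support, or, as in the paper, one uses that applying a differential operator to a function constant along $\spa(x)$ is the same as applying its projection to the pushforward). Then: vanishing of $p(D)B_X$ on $\Lambda$ forces $\sym(\pi_x)(p)(D)B_{X/x}$ to vanish on $\Lambda/x$, induction for $X/x$ gives $\sym(\pi_x)(p)=0$, exactness gives $p=p_xq$ with $q\in\Pcal_-(X\setminus x)$, and your telescoping argument plus induction for $X\setminus x$ gives $q=0$. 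After this repair your argument is, in substance, the injectivity half of the paper's five-lemma proof, with the Holtz--Ron dimension formula standing in for the surjectivity half; note also that your induction passes through zero-dimensional contractions, where $B_{X/x}$ is a Dirac mass rather than a continuous function, whereas the paper avoids this degeneracy by treating the rank-one (cardinal $B$-spline) case of Section~\ref{Section:CardinalBSplines} as a separate base case.
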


\begin{Remark}
 Total unimodularity of the list $X$
 is a crucial requirement  in Theorem~\ref{Theorem:weakHoltzRon}. 
 Namely, the dimension of $\Pcal_-(X)$ and $\abs{\Zcal_-(X)}$ agree if and only if $X$ is totally unimodular.
 Note that if one vector in $X$ is multiplied by an integer $\lambda\ge 2$, $\abs{\Zcal_-(X)}$ increases
 while $\Pcal_-(X)$ stays the same.

 Total unimodularity also enables us to make a simple deletion-contraction proof:
 it implies that $\Lambda/x$ is a lattice for all $x\in X$. In general, quotients of lattices may contain torsion elements.
\end{Remark}

\begin{Remark}
We have mentioned above that $\dim(\Pcal_-(X))=\abs{\Zcal_-(X)}$ holds. This 
 is a consequence of a deep connection between the spaces $\Pcal_-(X)$ and 
  $\Pcal(X)$ and matroid theory.
The Hilbert series of these two spaces
 are evaluations of the Tutte polynomial of the matroid defined by $X$ \cite{ardila-postnikov-2009}.
 One can deduce that the Hilbert series of the internal space is equal to the $h$-polynomial of the broken-circuit
complex \cite{brylawski-1977} of the matroid $M^*(X)$ that is  dual to the matroid defined by $X$
and the Hilbert series of the central space equals the
$h$-polynomial of the matroid complex of $M^*(X)$.
 The Ehrhart polynomial of a zonotope that is defined by a totally unimodular matrix
  is also an evaluation of the Tutte polynomial (see \eg \cite{welsh-tutte-1999}).
In summary, for a totally unimodular matrix $X$
\begin{equation}
\label{equation:TutteDimensionPointCount}
 \dim \Pcal_-(X) = \abs{\Zcal_-(X)} = {\mathfrak T}_X(0,1)  \text{ and } 
 \dim \Pcal(X) = \vol(Z(X)) = {\mathfrak T}_X(1,1)
\end{equation}
holds, where ${\mathfrak T}_X$ denotes the Tutte polynomial of the matroid defined by $X$.

It is also interesting to know that the Ehrhart polynomial of an arbitrary zonotope defined by an integer matrix is an evaluation
 of the arithmetic Tutte polynomial \cite{moci-adderio-ehrhart-2012,moci-adderio-2013}.
\end{Remark}

\begin{figure}[t]
\begin{center}
\input{BoxSpline3.pspdftex}
\end{center}
\caption{A very simple two-dimensional example. Here, $X=((1,0),(0,1),(1,1))$, $\Pcal_-(X)=\R$, and $\abs{\Zcal_-(X)}=1$.
}
\label{Figure:SimpleTwoDBoxSpline}
\end{figure}

\subsection*{Organisation of the article}
In Section~\ref{Section:BoxSplineBackground} we will discuss some basic properties of splines.
We will prove the Main Theorem %
in the one-dimensional case in
Section~\ref{Section:CardinalBSplines}. 
In Section~\ref{Section:Smoothness} we will recall the wall-crossing formula for splines and employ it to
prove Proposition~\ref{Proposition:WeakHRwellDefined}.
In Section~\ref{Section:DeletionContraction} we will define deletion and contraction and prove two lemmas
 that will be used in Section~\ref{Section:ExactSequences} in the proof of the Main Theorem.

\section{Splines}
\label{Section:BoxSplineBackground}
In this section we will introduce the multivariate spline and discuss some basic properties of  splines.
 Proofs of the results that we mention here can be found in  \cite[Chapter 7]{concini-procesi-book}
 and some also in \cite{BoxSplineBook}.

\smallskip
 If the convex hull of the vectors in $X$ does not contain $0$, we  define the
 \emph{multivariate spline} (or truncated power) $T_X : U \to \R$ by
\begin{equation} 
 \label{eq:MultSplineVolumeFormula}
 T_X(u) := \frac{1}{\sqrt{\det(XX^T)}}\vol\nolimits_{N- d} %
   \{ (\lambda_1,\ldots, \lambda_N)  \in \R^N_{\ge 0} : \sum_{i=1}^N \lambda_i x_i = u \}. 
\end{equation}
The support of $T_X$ is the \emph{cone} $\cone(X) := \left\{ \sum_{i=1}^N \lambda_i x_i : \lambda_i \ge 0 \right\}$.

Sometimes it is useful to think of the two splines $B_X$ and $T_X$ as distributions.
 In particular,  one can then define 
 the splines for lists $X\subseteq U$ that do not span $U$.
  
 \begin{Remark}
\label{Definition:Splines}
 Let $X\subseteq U\cong \R^r$ be a finite list of vectors.
The multivariate spline $T_X$ and the box spline $B_X$ are distributions that are characterised 
 by the formulae
\begin{align}
 \int_{U}  \varphi(u) B_X(u) \,\di u &= \int_0^1 \cdots \int_0^1 \varphi 
             \left( \sum_{i=1}^N \lambda_ix_i \right) \di \lambda_1 \cdots \di \lambda_N  \\
 \text{and } \int_{U}  \varphi(u) T_X(u) \,\di u &= \int_0^\infty \cdots \int_0^\infty 
	      \varphi \left( \sum_{i=1}^N \lambda_ix_i \right) \di \lambda_1 \cdots \di \lambda_N.
\end{align}
where $\varphi$ denotes a test function.
\end{Remark}
 
\begin{Remark}
Convolutions of splines are again splines. In particular,
\begin{equation}
\label{eq:SplinesAsConvolutions}
  T_X = T_{x_1} * \cdots * T_{x_N} \text{ and } B_X = B_{x_1} * \cdots * B_{x_n}.
\end{equation}
For $x\in X$, differentiation of the two splines in direction $x$ is particularly easy:
\begin{align}
\label{eq:MultSplineDiff}
 D_x T_X &= T_{X\setminus x}  \\ \text{ and }
\label{eq:BoxSplineDiff}
 D_x B_X &= \nabla_x B_{X\setminus x} := B_{X\setminus x} - B_{X\setminus x}(\cdot - x).
\end{align}
\end{Remark}

\begin{Remark}
\label{Remark:SplineBases}
For a basis $C\subseteq U$,
\begin{equation}
\label{eq:multivariateBasis}
  B_C  = \frac{\chi_{Z(C)}}{\abs{\det(C)}}  \text{ and }
  T_C  = \frac{\chi_{\cone(C)}}{\abs{\det(C)}},
\end{equation}
where $\chi_A : U \to \{0,1\}$ denotes the indicator function of the set $A\subseteq U$.
In conjunction with \eqref{eq:SplinesAsConvolutions}, %
  \eqref{eq:multivariateBasis} provides a simple recursive method to calculate the splines.
 \end{Remark}
 
 \begin{Remark}
 The box spline can easily be obtained from the  multivariate spline. 
 Namely,
\begin{equation}
\label{eq:BoxSplineByMultivariateSplines}
B_X(u) =\sum_{S\subseteq X} (-1)^{\abs S} T_X\left(u - a_S\right),
\end{equation}
where $a_S:=\sum_{a\in S} a$.
\end{Remark}

\section{Cardinal $B$-splines}
\label{Section:CardinalBSplines}
In this section we will prove Theorem~\ref{Theorem:weakHoltzRon} in the one-dimensional case.
 This will be the base case for the inductive proof of the Main Theorem in Section~\ref{Section:ExactSequences}.

Let $X_{N}:=(\underbrace{1,\ldots, 1}_{N\text{ times}})\subseteq \Z\subseteq \R^1$.
\WLOG\
every totally unimodular list of vectors in $\R^1$ can be written in this way.

One can easily calculate the corresponding box splines (cf.~Remark~\ref{Remark:SplineBases}):
\begin{equation}
B_{X_{N+1}}(u)= \int_0^1 B_{X_{N}}(u - \tau ) \,\di \tau = \sum_{j=0}^{N+1} \frac{(-1)^{j}}{N!}  \binom{N+1}{j}(u-j)_+^{N},
\end{equation}
where $(u-j)_+^N:=\max(u-j,0)^N$. 
The functions $B_{X_{N+1}}$ are called \emph{cardinal $B$-splines} in the literature (\eg \cite{boor-1976}).

Note that $\Zcal_-(X_{N+1}) = \{1,2,\ldots, N \}$,
\begin{equation*}
\Pcal_{X_{N+1}} = \spa\{ 1, s,\ldots, s^N \},
\text{ and } \Pcal_-(X_{N+1}) = \spa\{1,s,\ldots, s^{N-1} \}. 
\end{equation*}
Hence, in the one-dimensional case, 
 Theorem~\ref{Theorem:weakHoltzRon} is equivalent to the following proposition.
\begin{Proposition}
\label{Proposition:MainThmOneD}
Let $N\in \N$. For every function $f: \{1,\ldots, N\}\to \R$, there exist uniquely determined numbers
 $\lambda_1,\ldots, \lambda_N\in \R$ \st
 \begin{equation}
   \sum_{i=1}^N \lambda_i  D_x^{i-1} B_{X_{N+1}}(j) = f(j) \text{ for } j=1,\ldots, N.
 \end{equation}
\end{Proposition}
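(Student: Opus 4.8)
The plan is to reduce this one-dimensional statement to a determinant (non-vanishing) computation. For fixed $N$, the map $(\lambda_1,\ldots,\lambda_N)\mapsto \bigl(\sum_i \lambda_i D_x^{i-1}B_{X_{N+1}}(j)\bigr)_{j=1,\ldots,N}$ is linear between two $N$-dimensional real vector spaces, so existence and uniqueness for every $f$ are both equivalent to the invertibility of the $N\times N$ matrix $M=\bigl(D_x^{i-1}B_{X_{N+1}}(j)\bigr)_{i,j}$ (rows indexed by $i$, columns by $j$, or the transpose — it does not matter). Thus everything comes down to showing $\det M\neq 0$.

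To get a handle on the entries, I would use the differentiation rule \eqref{eq:BoxSplineDiff}, which in this one-dimensional setting reads $D_x B_{X_{M}} = B_{X_{M-1}} - B_{X_{M-1}}(\cdot-1)$, i.e.\ $D_x = \nabla$ acts as a finite backward difference. Iterating, $D_x^{i-1}B_{X_{N+1}} = \nabla^{i-1}B_{X_{N+2-i}}$, a signed sum of shifted copies of a lower-order cardinal $B$-spline. Evaluating at integer points $j\in\{1,\ldots,N\}$, the values $B_{X_{M}}(k)$ for integer $k$ are classical: they are (up to the normalisation $1/(M-1)!$) Eulerian-type numbers, and in particular $B_{X_{M}}$ restricted to $\{1,\ldots,M-1\}$ is strictly positive and symmetric, and it is supported on $[0,M]$ so it vanishes at integers $\le 0$ and $\ge M$. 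A cleaner route: since $D_x^{N-1}B_{X_{N+1}} = \nabla^{N-1}B_{X_2}$ and $B_{X_2}$ is the hat function with $B_{X_2}(1)=1$, $B_{X_2}(k)=0$ for $k\neq 1$, one sees the "top" row of $M$ is a row of binomial coefficients with alternating signs; more generally the rows of $M$ interpolate between the positive bump $(B_{X_{N+1}}(1),\ldots,B_{X_{N+1}}(N))$ and this alternating-sign row. This suggests performing row operations on $M$ corresponding to the difference operator and reducing $\det M$ to a triangular or Vandermonde-like form.

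The slickest way to organise this is by induction on $N$, mirroring the deletion/contraction philosophy of the main proof: apply $\nabla$ to all rows except the first (or equivalently do column operations $C_j \leftarrow C_j - C_{j-1}$ on the transpose), using $D_x^{i-1}B_{X_{N+1}}(j) - D_x^{i-1}B_{X_{N+1}}(j-1) = -D_x^{i}B_{X_{N+1}}(j)$ together with $B_{X_{N+1}}(0)=0$, to exhibit a block structure whose Schur complement is the corresponding matrix for $N-1$ (built from $B_{X_{N}}$). One checks the base case $N=1$ directly: $B_{X_2}(1)=1\neq 0$. Then $\det M_N = \pm \det M_{N-1}$ up to a nonzero scalar coming from the first row/column, and the induction closes.

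The main obstacle is bookkeeping the boundary terms and signs when iterating the difference operator: one must be careful that $B_{X_{M}}$ evaluated at $0$ and at $M$ vanishes (so that $\nabla$ applied to the value table does not pick up spurious contributions outside $\{1,\ldots,M-1\}$), and that after stripping one order of differentiation the surviving block really is the order-$(N-1)$ problem and not some twisted variant. I expect the positivity and symmetry of the cardinal $B$-spline values on their integer support, plus the vanishing at the endpoints $0$ and $M$, to be exactly what makes the induction go through; once the reduction to $\det M\neq 0$ is in place, the combinatorics is routine.
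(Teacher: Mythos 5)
Your overall strategy---reduce the statement to the nonvanishing of $\det M^N$ with $m^N_{ij}=D_x^{i-1}B_{X_{N+1}}(j)$, then induct on $N$ using the derivative/difference structure of cardinal $B$-splines together with positivity and endpoint vanishing of their integer values---is exactly the paper's. However, the identity on which you base the inductive step is false: you write $D_x^{i-1}B_{X_{N+1}}(j)-D_x^{i-1}B_{X_{N+1}}(j-1)=-D_x^{i}B_{X_{N+1}}(j)$, i.e.\ that a backward difference of a derivative of $B_{X_{N+1}}$ is again a derivative of the \emph{same} spline. Formula \eqref{eq:BoxSplineDiff} says something different: differentiation lowers the order, $D_x^{i}B_{X_{N+1}}=\nabla_x D_x^{i-1}B_{X_{N}}$, and equivalently $\nabla_x D_x^{i-1}B_{X_{N+1}}=D_x^{i}B_{X_{N+2}}$ (the order goes \emph{up} under differencing, it never stays fixed). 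With your identity the proposed row/column operations stay inside the same matrix and never produce the order-$(N-1)$ matrix, so the ``Schur complement is the order-$(N-1)$ problem'' step---which you explicitly defer as bookkeeping---is precisely the part that does not come out as stated, and it is the heart of the proof.

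The repair is short and lands on the paper's argument. Since $m^N_{ij}=m^{N-1}_{i-1,j}-m^{N-1}_{i-1,j-1}$ for $i\ge 2$, with the conventions $m^{N-1}_{i,0}=m^{N-1}_{i,N}=0$ (justified because $B_{X_N}$ and its derivatives up to order $N-2$ vanish at $0$ and at $N$), replacing column $j$ of $M^N$ by the partial sum of columns $1,\dots,j$ turns rows $2,\dots,N$ into $M^{N-1}$ bordered by a zero last column, while the first row becomes the partial sums of the positive numbers $B_{X_{N+1}}(1),\dots,B_{X_{N+1}}(N)$. Expanding along the last column gives $\det M^N=\pm\bigl(\sum_{j}B_{X_{N+1}}(j)\bigr)\det M^{N-1}\neq 0$ by induction, the base case $M^1=(B_{X_2}(1))=(1)$ being the one you identified. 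The paper runs this same induction in the language of linear independence of columns (a telescoping argument forces all coefficients of a dependence to be equal, and positivity of the top row then forces them to vanish), which avoids determinants but uses the identical ingredients.
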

Before proving this proposition, we give a  few  simple examples (see also Figure~\ref{Figure:CardinalBSplines}).
\begin{Example}
\label{Example:CardinalBsplines}
\begin{align}
B_{X_{2}}(s) &= s - 2(s-1)_+ + (s-2)_+
\\
B_{X_{3}}(s) &=  \frac 12\left({s^2} -  3(s-1)^2_+ +  3(s-2)^2_+ -  (s-3)^2_+\right) 
\\
B_{X_{4}}(s) &= \frac 16\left({s^3} -  4(s-1)^3_+ + 6(s-2)_+ -  4 (s-3)^3_+ + (s-4)^3_+ \right) 
\end{align}
\begin{sagesilent}
#f=Piecewise([[(0,1),x^2/2],[(1,2),x^2/2-3/2*(x-1)**2],[(2,3),x^2/2-3/2*(x-1)**2+ 3/2*(x-2)**2],[(3,5),x^2/2-3/2*(x-1)**2+ 3/2*(x-2)**2- 1/2*(x-3)**2]])

def posp(x,n) :    # return nth power if positive and zero otherwise
   return max_symbolic(x,0)**n 
   
B2(x) = max_symbolic(x,0) - 2*max_symbolic(x-1,0) + max_symbolic(x-2,0)

B3(x) = posp(x,2)/2 - 3/2*posp(x-1,2) +  3/2*posp(x-2,2) - 1/2*posp(x-3,2)

B4(x) = posp(x,3)/6 - 4/6*posp(x-1,3) + posp(x-2,3) - 4/6*posp(x-3,3) + posp(x-4,3)/6 

#B_{X_{3}}(s) &=  \frac {s^2}2 - \frac 32(s-1)^2_+  + \frac 32(s-2)^2_+ - 
# #  \frac 12 (s-3)^2_+ 
#\\#
#B_{X_{4}}(s) &= \frac {s^3}6 - \frac 46(s-1)^3_+ + \frac 66(s-2)_+ - \frac 46 (s-3)_+ + \frac 16(s-4)_+ 
#\\
#B_{X_{5}}(s) &= \frac {s^4}{24} - \frac 5{24}(s-1)^4_+ + \frac{10}{24}(s-2)^4_+ - \frac {10}{24}(s-3)^4_+ + \frac{10}{24}(s-4)^4_+

p2 = plot(B2,0,2.1,thickness=5,fontsize=24, aspect_ratio=2.1, ymax=1)
p2.axes_width(4)

p3 = plot(B3,0,3.1,thickness=5,fontsize=24, aspect_ratio=3.1, ymax=1)
p3.axes_width(4)

p4 = plot(B4,0,4.1,thickness=5,fontsize=24, aspect_ratio=4.1, ymax=1)
p4.axes_width(4)
\end{sagesilent}
\begin{figure}[t]
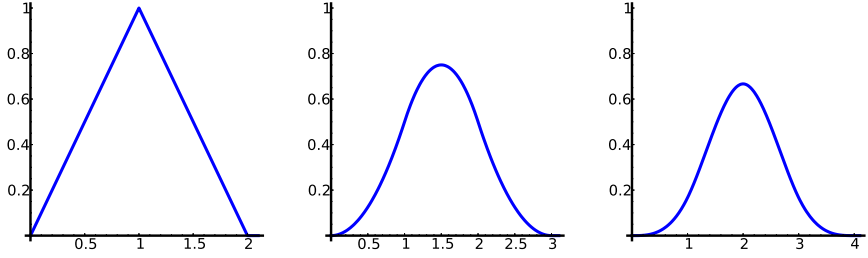
 
\begin{center}
\sageplot[scale=0.23]{p2.plot()}
\sageplot[scale=0.23]{p3.plot()}
\sageplot[scale=0.23]{p4.plot()}
\caption{The cardinal $B$-splines $B_2$, $B_3$, and $B_4$.}
\label{Figure:CardinalBSplines}
\end{center}
\end{figure}
The matrices $M^N$ are defined in \eqref{eq:DefinitionMmatrix} below.
\begin{align*}
 M^2&= \begin{pmatrix}
   1   
\end{pmatrix}
\qquad
\qquad
 M^3= \begin{pmatrix}
   \frac 12 & \frac 12 \\[1mm]
   1 & -1 
\end{pmatrix}
\qquad
\qquad
 M^4= \begin{pmatrix}
   \frac 16 & \frac 46 & \frac 16 \\[1mm]
   \frac 12 & 0 & -\frac 12  \\
   1 & -2 & 1 
\end{pmatrix}
\end{align*}
\end{Example}
\begin{proof}[Proof of Proposition~\ref{Proposition:MainThmOneD}]
For $N\in \N$, we consider the matrix $(N\times N)$-matrix $M^N$ whose entries are given by
\begin{equation}
\label{eq:DefinitionMmatrix}
m_{ij}^N = D_x^{i-1} B_{X_{N+1}}(j).
\end{equation}
The proposition is  equivalent to $M^N$ having full rank.
 The matrix $M^2 = (1)$ obviously has full rank. Let us proceed by induction.  
 By \eqref{eq:BoxSplineDiff}, $D_x^i B_{X_{N+1}} = \nabla_x D_x^{i-1} B_{X_{N}}$. Thus,
 the matrices satisfy the following recursion:
 \begin{equation}
   m_{ij}^N =  m_{i-1,j}^{N-1} - m_{i-1,j-1}^{N-1} \text{ for } i=1,\ldots, N-1,\: j=1,\ldots, N,\text{ and } N\ge 2. 
 \end{equation}
To simplify notation, we set $m_{i0}^{N-1}=m^{N-1}_{i,N}=0$.
Let $v_k,\ldots, v_N$ denote the columns of $M^N$. By induction, they are linearly independent. 
 The columns of $M^{N+1}$ are $(\alpha_1 ,v_1-v_0),\ldots,  (\alpha_N+1, v_{N+1}-v_N)$ with 
 $\alpha_j := B_{X_{N+1}}(j)$. We will now show that these vectors are linearly independent as well.
 Let $\lambda_1\ldots, \lambda_{N+1}\in \R$ \st 
 \begin{equation}
\label{eq:LinearDependence}
 \sum_{j=1}^{N+1}\lambda_j \alpha_j =0
 \end{equation}
 and
 $\lambda_1(v_1-v_0) + \lambda_2(v_2-v_1)+ \ldots + \lambda_N(v_N-v_{N-1}) - \lambda_{N+1} v_N=0$. 
  The latter equation implies that
 all $\lambda_i$ are equal. We conclude that they must all be zero because of \eqref{eq:LinearDependence} and the fact that 
  the $\alpha_j$ are positive.
\end{proof}

\section{Smoothness and wall-crossing}
\label{Section:Smoothness}
The goal of this section is to prove Proposition~\ref{Proposition:WeakHRwellDefined}.
Before doing this, 
we mention some results on the structure of the multivariate spline $T_X$ that are used in the proof. The Wall-Crossing Theorem describes
 the behaviour of $T_X$ when we pass from one region of polynomiality to another.

\begin{Definition}
A \emph{tope} is a connected component of the complement of 
\begin{equation}
\Hcal_X :=\{ \spa(Y) : Y\subseteq X,\, \rank(Y)= \rank(X)-1 \} \subseteq U
\end{equation}
\end{Definition}

The following theorem is  a consequence of  Lemma 3.3 and Proposition 3.7 in \cite{deconcini-procesi-vergne-2010a}.
\begin{Theorem}
\label{Theorem:TXpolynomialontopes}
 Let $X\subseteq U\cong \R^d$ be a  list of vectors $N$ that spans $U$ 
 and whose convex hull does not contain $0$.

 Then $T_X$ agrees with a homogeneous polynomial $f^\tau$ of degree $N-d$
   on every tope $\tau$.%
\end{Theorem}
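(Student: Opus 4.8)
The plan is to prove the statement by induction on $N-d$, where $d=\rank(X)$, using the differentiation rule $D_xT_X=T_{X\setminus x}$ from \eqref{eq:MultSplineDiff}, and to read off the homogeneity separately from the volume formula \eqref{eq:MultSplineVolumeFormula}. Two preliminary remarks streamline the argument. First, for $t>0$ the substitution $\lambda\mapsto t\lambda$ in \eqref{eq:MultSplineVolumeFormula} gives $T_X(tu)=t^{N-d}T_X(u)$, so $T_X$ is positively homogeneous of degree $N-d$; consequently any polynomial that agrees with $T_X$ on some nonempty open set is automatically homogeneous of degree $N-d$ (compare the homogeneous components of $f(tu)$ and $t^{N-d}f(u)$ there, and use that the constant term must vanish since $N-d\ge 1$ in the inductive step). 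Second, $\Hcal_X$ is a finite central hyperplane arrangement, so each tope is, for an appropriate sign vector, the intersection of the corresponding open half-spaces through the origin, hence an open convex cone. By these two remarks it suffices to show that $T_X$ restricts to a polynomial on every tope $\tau$.

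For the base case $N=d$ the list $X$ is a basis, and by \eqref{eq:multivariateBasis} $T_X=\chi_{\cone(X)}/\abs{\det X}$; here $\Hcal_X$ consists exactly of the coordinate hyperplanes $\spa(X\setminus x)$, so $\interior\cone(X)$ is a single tope on which $T_X$ is the constant $1/\abs{\det X}$, while $T_X$ vanishes identically on every other tope, and in all cases $T_X$ restricts to a homogeneous polynomial of degree $0=N-d$. Now let $N>d$ and fix a tope $\tau$. I claim that for every $x\in X$ the distribution $D_xT_X=T_{X\setminus x}$ restricts to a polynomial on $\tau$. If $x$ is not a coloop, then $X\setminus x$ still spans $U$, we have $\conv(X\setminus x)\subseteq\conv(X)\not\ni 0$, and $\Hcal_{X\setminus x}\subseteq\Hcal_X$ (every $\spa(Y)$ with $Y\subseteq X\setminus x$ and $\rank Y=d-1$ lies in $\Hcal_X$), so $\tau$ is contained in a single tope of $\Hcal_{X\setminus x}$ and the inductive hypothesis makes $T_{X\setminus x}$ a homogeneous polynomial of degree $(N-1)-d$ on $\tau$. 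If $x$ is a coloop, then $\rank(X\setminus x)=d-1$, so as a distribution $T_{X\setminus x}$ is supported on the hyperplane $\spa(X\setminus x)\in\Hcal_X$, which is disjoint from $\tau$; hence $D_xT_X$ vanishes on $\tau$. Since $X$ spans $U$, each coordinate derivative $\partial/\partial s_j$ is a linear combination of the $D_x$, $x\in X$, so every first-order distributional partial of $T_X$ restricts to a polynomial on $\tau$; as $T_X$ is a locally integrable function (this uses $0\notin\conv(X)$) and $\tau$ is open and convex, $T_X$ restricts to a polynomial $f^\tau$ on $\tau$, which by the first remark is homogeneous of degree $N-d$. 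Finally the step is never vacuous: since $N>d$ the list $X$ is dependent, hence contains a circuit, any element of which is a non-coloop to which the inductive hypothesis is applied.

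The step requiring the most care is the coloop/non-coloop dichotomy together with the passage from the distributional identity \eqref{eq:MultSplineDiff} to pointwise behaviour on topes — in particular, verifying that in the coloop case $D_xT_X$ genuinely vanishes on the open tope (not merely that it is concentrated near a lower-dimensional set), and that ``all first-order partials polynomial on an open convex set'' legitimately upgrades the a priori merely locally integrable $T_X$ to a genuine polynomial there. The homogeneity and degree bookkeeping are routine. (A more computational alternative would realise $T_X|_\tau$ as an iterated residue, or inverse Laplace transform, of $\prod_{x\in X}\pair{x}{\cdot}^{-1}$; the inductive argument above seems shorter and avoids residue calculus.)
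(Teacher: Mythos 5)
Your argument is sound in outline and is genuinely different from what the paper does: the paper offers no proof of Theorem~\ref{Theorem:TXpolynomialontopes} at all, it simply quotes it from de~Concini--Procesi--Vergne (Lemma~3.3 and Proposition~3.7 of \cite{deconcini-procesi-vergne-2010a}). Your route --- induction on $N-d$ via \eqref{eq:MultSplineDiff}, the base case \eqref{eq:multivariateBasis}, the coloop/non-coloop dichotomy (with the correct observations that $\Hcal_{X\setminus x}\subseteq\Hcal_X$ for a non-coloop, that $0\notin\conv(X\setminus x)$, and that for a coloop $T_{X\setminus x}$ is supported on $\spa(X\setminus x)\in\Hcal_X$, hence restricts to zero on $\tau$), followed by integrating the polynomial gradient on the open convex cone $\tau$ and reading off homogeneity from $T_X(tu)=t^{N-d}T_X(u)$ --- is essentially the standard self-contained proof. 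What it buys is independence from the cited reference; what the citation buys, besides brevity, is the finer wall-crossing information of Theorem~\ref{Theorem:WallCrossing}, which your induction does not reproduce and which the paper needs anyway.

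There is one loose end, which you flag but do not close. Your upgrade step lives at the level of distributions: a locally integrable function whose distributional gradient is polynomial on a connected open convex set agrees with a polynomial \emph{almost everywhere} there. The theorem, however, concerns the pointwise-defined volume function \eqref{eq:MultSplineVolumeFormula} on all of $\tau$, and that is also how it is used later (local pieces of $T_X$ on topes). So you still must identify the pointwise values with the a.e.\ representative on $\tau$. Two quick ways to finish: prove that the pointwise $T_X$ is continuous on each tope, or run your induction at the level of functions using the pointwise Tonelli form of \eqref{eq:SplinesAsConvolutions}, namely $T_X(u)=\int_0^\infty T_{X\setminus x}(u-tx)\,\di t$ for a non-coloop $x$; since by induction $T_{X\setminus x}$ is polynomial on the finitely many topes of $\Hcal_{X\setminus x}$ met by the ray $u-\R_{\ge 0}x$, this integral is visibly continuous (indeed polynomial) in $u$ on $\tau$, which gives pointwise agreement. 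Everything else --- the degree and homogeneity bookkeeping, the vanishing of $D_xT_X$ on $\tau$ in the coloop case, and the reduction of $\partial/\partial s_j$ to the $D_x$ --- is correct as written.
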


Given a hyperplane $H$ and a tope $\tau$ which does not intersect $H$ (but its closure may do so),
 we partition $X\setminus H$ into two sets $A_H^{\tau}$ and $B_H^{\tau}$. The set $A_H^{\tau}$ contains the vectors that lie on the same side of
  $H$ as $\tau$ and $B_H^\tau$ contains the vectors that lie on the other side.
 Note that the convex hull of $(A_H^{\tau},-B_H^{\tau})$ does not contain $0$.
 Hence,  we can define the multivariate spline
  \begin{equation}
   T^{\tau}_{X\setminus H} := (-1)^{\abs{B_H^{\tau}}} T_{(A_H^{\tau},-B_H^{\tau})}.
  \end{equation}
Now we are ready to state the wall-crossing formula as in  \cite[Theorem 4.10]{deconcini-procesi-vergne-2010a}.
Related results are in \cite{dahmen-micchelli-1988,szenes-vergne-2003}.
\begin{Theorem}[Wall-crossing for multivariate splines]
\label{Theorem:WallCrossing}
Let $\tau_1$ and $\tau_2$ be two topes whose closures have an $r-1$ dimensional intersection $\tau_{12}$ that spans a 
 hyperplane $H$. Then
  there exists a uniquely determined distribution $ f^{\tau_{12}}$ that is supported on $H$
  \st
    the difference of the local pieces of $T_X$ in $\tau_1$ and $\tau_2$ is equal to the polynomial
 \begin{equation}
   T_X^{\tau_1} - T_X^{\tau_2} = (T_{X\setminus H}^{\tau_1} - T_{X\setminus H}^{-\tau_1})
       * f^{\tau_{12}}.
  \end{equation}
\end{Theorem}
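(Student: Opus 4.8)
The final statement to prove is the Wall-Crossing Theorem (Theorem~\ref{Theorem:WallCrossing}) for multivariate splines. Let me sketch a proof plan.

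\medskip

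\textbf{Proof proposal.}
The plan is to reduce the $d$-dimensional wall-crossing statement to a one-dimensional computation along lines transverse to the hyperplane $H$, exploiting the convolution structure of the multivariate spline. First I would choose coordinates adapted to $H$: write $U \cong H \oplus \R v$ where $v$ is a vector transverse to $H$ chosen (after rescaling) so that crossing $H$ corresponds to the last coordinate changing sign. Using the convolution identity $T_X = T_{x_1} * \cdots * T_{x_N}$ from \eqref{eq:SplinesAsConvolutions}, I would separate the vectors of $X$ that lie on $H$ from those that do not: if $X \cap H = Y$ and $X \setminus H = Z$, then $T_X = T_Y * T_Z$, where $T_Y$ is a spline supported on the hyperplane $H$ (viewed as a distribution on $U$) and $T_Z$ is the convolution of the truncated powers in the directions of $Z$, none of which lie in $H$.

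\medskip

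The key step is to analyse how $T_Z$ behaves across $H$. Partition $Z = X \setminus H$ into $A_H^{\tau_1}$ and $B_H^{\tau_1}$ according to which side of $H$ each vector lies on (relative to $\tau_1$); the spline $T^{\tau_1}_{X\setminus H} = (-1)^{\abs{B_H^{\tau_1}}} T_{(A_H^{\tau_1}, -B_H^{\tau_1})}$ is, by Theorem~\ref{Theorem:TXpolynomialontopes}, polynomial on the tope containing $\tau_1$ (its convex hull does not contain $0$, and in fact all its vectors point strictly to one side of $H$, so $H$ does not separate any region of polynomiality of $T_Z$ near $\tau_{12}$ from the others). The difference $T^{\tau_1}_{X\setminus H} - T^{-\tau_1}_{X\setminus H}$ is precisely the jump in $T_Z$ as one crosses $H$ in the neighbourhood of $\tau_{12}$, and this jump is supported on (a neighbourhood in) $H$ itself. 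Convolving this jump with $T_Y$ — which lives on $H$ — produces the distribution $f^{\tau_{12}}$ supported on $H$, and one checks that $(T_Y * T_Z)^{\tau_1} - (T_Y * T_Z)^{\tau_2}$ equals $(T^{\tau_1}_{X\setminus H} - T^{-\tau_1}_{X\setminus H}) * f^{\tau_{12}}$ by matching both sides as distributions, using that convolution is continuous and that the only discontinuity across $H$ comes from the $Z$-factor. Uniqueness of $f^{\tau_{12}}$ follows because $T_Y$ is (a positive multiple of) the box/cone spline of a full-rank list in $H$, hence injective under convolution on the relevant distribution classes, so $f^{\tau_{12}}$ is determined by the equation.

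\medskip

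I expect the main obstacle to be the careful bookkeeping of \emph{which} regions of polynomiality are involved and the precise meaning of the "local pieces" $T_X^{\tau_i}$ as polynomials extended beyond their topes: one must argue that, when restricted to a small transverse segment through a relative-interior point of $\tau_{12}$, the spline $T_X$ is a convolution of a smooth (polynomial) density on $H$ against a one-variable truncated-power combination, and that the genuine discontinuity in the $(N-d)$-th derivative (or in the function itself, in the codimension-one-support case) is captured exactly by the factorisation above. Handling the distributional convolution of $T_Y$ (supported on a lower-dimensional affine subspace) with $f^{\tau_{12}}$ rigorously — making sure the product is well-defined and that the support claim "$f^{\tau_{12}}$ is supported on $H$" is literally correct rather than merely supported on the affine span of $\tau_{12}$ — is the delicate point; but since the cited reference \cite{deconcini-procesi-vergne-2010a} establishes exactly this, I would follow their Lemma~3.3 and Proposition~3.7 for the polynomiality on topes and adapt their Theorem~4.10 argument, reducing everything to the transverse one-dimensional jump computation for cardinal-type splines already handled in Section~\ref{Section:CardinalBSplines}.
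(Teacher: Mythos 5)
You should first be aware that the paper does not prove Theorem~\ref{Theorem:WallCrossing} at all: it is imported verbatim from \cite[Theorem 4.10]{deconcini-procesi-vergne-2010a}, so the only question is whether your sketch would stand on its own, and as written it has two genuine gaps. The decomposition $T_X=T_{X\cap H}*T_{X\setminus H}$ and the reduction to a transverse jump is a reasonable starting point, but your construction of $f^{\tau_{12}}$ is not correct. The difference $T_{X\setminus H}^{\tau_1}-T_{X\setminus H}^{-\tau_1}$ is supported on the union of two opposite cones that lie on either side of $H$ and meet only at the origin; it is not ``a jump supported on a neighbourhood in $H$'', and convolving it with $T_Y=T_{X\cap H}$ yields a distribution whose support is full-dimensional, not contained in $H$ -- so ``$f^{\tau_{12}}:=T_Y*(\text{jump})$'' is a type error. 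Nor can one take $f^{\tau_{12}}=T_{X\cap H}$ regarded as a distribution on $H$: for $X=((1,0),(0,1),(1,1))$ and $H$ the $s_1$-axis, with $\tau_1=\{0<s_2<s_1\}$ and $\tau_2=\{s_1>0,\,s_2<0\}$, the difference of local pieces is the polynomial $s_2$, the transverse factor is $\chi_{C}-\chi_{-C}$ with $C=\cone((0,1),(1,1))$, and the unique admissible $f^{\tau_{12}}$ is Lebesgue measure on the \emph{whole} line $H$, whereas $T_{X\cap H}$ is supported only on the ray $\{s_1\ge 0\}$ and gives a non-polynomial convolution. So existence requires an actual construction of $f^{\tau_{12}}$ (this is the substance of the argument in \cite{deconcini-procesi-vergne-2010a}), and your sketch essentially defers it back to the reference while offering an incorrect candidate.

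The second gap is uniqueness: you argue via injectivity of convolution with $T_Y$, but $T_Y$ does not occur in the defining identity. What must be shown is that convolution with the transverse factor $T_{X\setminus H}^{\tau_1}-T_{X\setminus H}^{-\tau_1}$ is injective on distributions supported on $H$; in coordinates $U\cong H\oplus\R v$ this amounts to showing that if $g$ is supported on $H$ and $(T_{X\setminus H}^{\tau_1}-T_{X\setminus H}^{-\tau_1})*g=0$ then $g=0$, e.g.\ by testing against points on one side of $H$ as in the example above. That is a different (and provable) statement, but it is not the one your proposal gives. In short: the overall strategy (factor through $H$, analyse the transverse jump, follow the cited Lemma 3.3, Proposition 3.7 and Theorem 4.10) is sensible, but the two steps you supply yourself -- the identification of $f^{\tau_{12}}$ and the uniqueness argument -- do not work as stated.
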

\begin{proof}[Proof of Proposition~\ref{Proposition:WeakHRwellDefined}]
 We will show that $p(D)T_X$ is continuous. By \eqref{eq:BoxSplineByMultivariateSplines}, this implies that
  $p(D)B_X$ is continuous.
 We may always assume that $0$ is not contained in the convex hull of $X$: deleting zeroes from $X$ changes  neither $B_X$ nor $Z(X)$. 
  In addition, one can always multiply a few vectors in $X$ by $-1$ \st all vectors lie on one side of some hyperplane. 
   This is equivalent to a translation of both, $Z(X)$ and $B_X$.

 Let $u\in U$. If $u\in U\setminus \Hcal_X$, there is nothing to prove:  
 by Theorem~\ref{Theorem:TXpolynomialontopes}, $T_X$ is polynomial in a neighbourhood of $u$ and hence smooth.
 If $u\in \Hcal_X$, $u$ is contained in the closure of at least two topes. We have to show that 
 the derivatives of the polynomial pieces in the topes agree on $u$.  
 This can be done using the wall-crossing formula.

 It is sufficient to prove that for two topes $\tau_1$ and $\tau_2$ that have an $(r-1)$-dimensional intersection $\tau_{12}$,
  $p(D)(T_X^{\tau_1} - T_X^{\tau_2})$ vanishes on $\tau_{12}$.

Fix a vector $x\in X\setminus H$. %
 By definition, $\Pcal_-(X)\subseteq \Pcal(X \setminus x)$. %
 This implies that
  $p$ can be written as a linear combination of polynomials $p_Y$ where $Y\subseteq X\setminus x$ and
   $X\setminus (Y \cup x)$ has rank $d$. Hence, $X\setminus (H\cup Y)$ contains at least two vectors.

   By Theorem~\ref{Theorem:WallCrossing},
   \begin{align}
   D_Y (T_{X\setminus H}^{\tau_1} - T_{X\setminus H}^{-\tau_1})
       * f^{\tau_{12}}
  =    (T_{X\setminus (H\cup Y)}^{\tau_1} - T_{X\setminus (H \cup Y)}^{-\tau_1})
       * D_{Y\cap H}f^{\tau_{12}}.
   \end{align}
 This polynomial is the convolution of a distribution supported on $H$
  with the distribution
   $(T_{X\setminus (H\cup Y)}^{\tau_1} - T_{X\setminus (H \cup Y)}^{-\tau_1})$. Since $X\setminus (H\cup Y)$ contains at least
    two elements, this polynomial vanishes on $H$. This finishes our proof.
\end{proof}

\begin{Remark}
Holtz and  Ron conjectured that $\Pcal_-(X)$ is spanned by polynomials $p_Y$ where $Y$ runs 
 over all sublists of $X$ \st 
  $X\setminus (Y\cup x)$ has full rank for all $x\in X$
   \cite[Conjecture 6.1]{holtz-ron-2011}. 
  By formula \eqref{eq:BoxSplineDiff}, this would have implied Proposition~\ref{Proposition:WeakHRwellDefined}. 
 However, this conjecture has recently been disproved \cite{ardila-postnikov-errata-2012}.
\end{Remark}

\section{Deletion and contraction}
\label{Section:DeletionContraction}
In this section we will introduce the operations deletion and contraction which will be used in the proof
 of Theorem~\ref{Theorem:weakHoltzRon} in the next section. We will also prove two lemmas
 about deletion and contraction for box splines and zonotopes.
\smallskip

Let $x\in X$. We call the list $X\setminus x$ the \emph{deletion} of $x$. 
 The image of $X\setminus x$ under the canonical projection $\pi_x : U \to U/\spa(x) =:U/x$ 
 is called the \emph{contraction} of $x$. It is denoted by $X/x$. %

The projection $\pi_x$ induces a map $\sym(\pi_x) : \sym(U)\to \sym(U/x)$.
 If we identify $\sym(U)$ with the polynomial ring $\R[s_1,\ldots,s_r]$ 
  and   $x=s_r$, then
 $\sym(\pi_x)$ is the map from 
 $\R[s_1,\ldots, s_{r}]$ to $\R[s_1,\ldots, s_{r-1}]$ that sends $s_r$ to zero and $s_1,\ldots, s_{r-1}$ to themselves.
 The space $\Pcal(X/x)$ is contained in the symmetric algebra $\sym(U/x)$.

 Since $X$ is totally unimodular,  $\Lambda/x \subseteq U/x$ is a lattice for every $x\in X$ and $X/x$ is  totally unimodular 
  with respect to this lattice. %
\begin{Lemma}
\label{Lemma:DiscreteEqualContinuous}
Let $x\in X$, $u\in U$, and $\bar u=u+\spa(x)$ the coset of $u$ in $X/x$. Then %
\begin{equation} 
\label{eq:BoxSplineContraction}
  B_{X/x}(\bar u) = \int_\R B_{X \setminus x}(u + \tau x) \,\di \tau  %
 =   \sum_{\lambda\in \Z}  B_{X}(u + \lambda x ).
\end{equation}
\end{Lemma}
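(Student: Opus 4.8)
Here is my plan for proving Lemma~\ref{Lemma:DiscreteEqualContinuous}.

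The plan is to establish the two equalities separately. For the first equality, $B_{X/x}(\bar u) = \int_\R B_{X\setminus x}(u+\tau x)\,\di\tau$, I would work at the level of distributions, as licensed by Remark~\ref{Definition:Splines}. Writing $X\setminus x = (y_1,\dots,y_{N-1})$, the defining formula says that pairing $B_{X\setminus x}$ against a test function $\varphi$ on $U$ amounts to integrating $\varphi(\sum\lambda_i y_i)$ over the unit cube in the $\lambda_i$. Now pair the right-hand side $u\mapsto \int_\R B_{X\setminus x}(u+\tau x)\,\di\tau$ against a test function $\psi$ on $U/x$, precomposed with $\pi_x$ to view it as a (non-compactly-supported, but that is harmless since $B_{X\setminus x}$ and hence the $\tau$-integral has compact support) function on $U$: one gets $\int_{U/x}\psi(\bar u)\int_\R B_{X\setminus x}(u+\tau x)\,\di\tau\,\di\bar u$, which by Fubini is $\int_U (\psi\circ\pi_x)(v)B_{X\setminus x}(v)\,\di v$ (choosing coordinates so that $\di v = \di\bar u\,\di\tau$, legitimate because $x$ together with a lattice basis of $\Lambda/x$ is a lattice basis of $\Lambda$, by total unimodularity, so the normalising volume factors match). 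Expanding via Remark~\ref{Definition:Splines} this is $\int_{[0,1]^{N-1}}\psi\bigl(\pi_x(\sum_i\lambda_i y_i)\bigr)\,\di\lambda = \int_{[0,1]^{N-1}}\psi\bigl(\sum_i\lambda_i\pi_x(y_i)\bigr)\,\di\lambda$, which is exactly the pairing of $B_{X/x}$ against $\psi$. Hence the two distributions agree, and since both sides are continuous functions (the left by smoothness properties of box splines in the full-rank case, the right as an integral of a compactly supported $L^1$ function against a continuous one — or one simply notes the equality of distributions already pins down the continuous representatives) the pointwise identity follows.

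For the second equality, $\int_\R B_{X\setminus x}(u+\tau x)\,\di\tau = \sum_{\lambda\in\Z}B_X(u+\lambda x)$, I would use the recursion for box splines under convolution, namely $B_X = B_{X\setminus x} * B_x$ from \eqref{eq:SplinesAsConvolutions}, together with $B_x = \chi_{[0,1]x}$ being the indicator of the segment from $0$ to $x$ (a one-dimensional box spline along the line $\R x$; see Remark~\ref{Remark:SplineBases}, using unimodularity so the determinant factor is $1$). Explicitly, $B_X(v) = \int_0^1 B_{X\setminus x}(v-tx)\,\di t$. Therefore
\begin{equation*}
\sum_{\lambda\in\Z}B_X(u+\lambda x) = \sum_{\lambda\in\Z}\int_0^1 B_{X\setminus x}(u+\lambda x - tx)\,\di t = \sum_{\lambda\in\Z}\int_{\lambda-1}^{\lambda}B_{X\setminus x}(u+sx)\,\di s,
\end{equation*}
where I substituted $s = \lambda - t$ in each summand. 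The intervals $[\lambda-1,\lambda]$ for $\lambda\in\Z$ tile $\R$ with overlaps only at endpoints (measure zero), so the sum telescopes into $\int_\R B_{X\setminus x}(u+sx)\,\di s$. The interchange of sum and integral is justified because $B_{X\setminus x}$ is compactly supported, so only finitely many $\lambda$ contribute for a fixed $u$.

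The main obstacle I anticipate is bookkeeping the normalisation constants $1/\sqrt{\det(XX^T)}$ and the choice of Lebesgue measures on $U$, $U/x$, and the line $\R x$: one must check that the measure on $U$ decomposes as the product of the pushforward measure on $U/x$ and Lebesgue measure along $x$ in a way compatible with the lattice-induced normalisations, and this is exactly where total unimodularity of $X$ (equivalently, $\Lambda/x$ being a lattice with $x$ completing a basis) enters essentially. Once the distributional identity in the first part is set up correctly with matching volume factors, the rest is routine Fubini and a telescoping sum. A minor secondary point is justifying that the distributional equalities upgrade to pointwise equalities of continuous functions, which follows since all three expressions are continuous (the box splines appearing have full rank, so are at worst continuous by the smoothness results underlying Proposition~\ref{Proposition:WeakHRwellDefined}, and the integral transforms of continuous compactly supported functions are continuous).
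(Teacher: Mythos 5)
Your proposal is correct and follows essentially the same route as the paper: the first equality is proved distributionally via the defining integral formula of Remark~\ref{Definition:Splines} together with Fubini and the identification of functions on $U/x$ with functions on $U$ constant along $\spa(x)$ (you pair against the pullback $\psi\circ\pi_x$, the paper against a $\psi$ whose fibre integral is the given test function on $U/x$ --- dual formulations of the same computation), and the second equality is obtained exactly as in the paper by writing $B_X=B_{X\setminus x}*B_x$ and splitting $\int_\R$ into unit intervals. Your attention to the measure normalisation under total unimodularity and to the distributional reading of the identity matches the paper's own caveats, so no gap.
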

\begin{proof}
  Let $\bar\varphi: U/x\to \R$ be a test function %
and let $\psi : U\to \R$ be a test function \st $\bar\varphi(\bar u)=\int_\R \psi(u+ \tau x) \,\di \tau$.
Note that a
  distribution $T$ on $U$ that is constant on all cosets of $\spa(x)$ can be identified with a distribution $\bar T$ on $U/x$ 
   via $\bar T(\bar\varphi)=T(\psi)$. This is how \eqref{eq:BoxSplineContraction} can be understood as an equality of distributions.
We may assume that $x=x_N$. Then
\begin{align}
 \int_{U/x} \bar\varphi(\bar u)B_{X/x}(\bar u)\,\di \bar u &=
 \int_0^1 \cdots \int_0^1\bar\varphi( \sum_i \lambda_i \bar x_i) \, \di \lambda_1 \cdots \,\di \lambda_{N-1}  \\
  &=    
 \int_0^1 \cdots \int_0^1\int_\R \psi( \sum_i \lambda_i  x_i  + \tau x) \, \di  \tau \,\di \lambda_1 \cdots \,\di \lambda_{N-1}   \\
  &=    
 \int_U   \psi(u)  %
  {\int_\R B_{X\setminus x}(u +  \tau x)} %
  \,   \di  \tau \,\di u.
\end{align}
This proves the  first equality. 
For the second equality, note that
\begin{equation*}
\int_\R B_{X \setminus x}(u+\tau x) \,\di\tau 
                 = \sum_{\lambda\in\Z}\int_0^1 B_{X \setminus x}(u+ \lambda x - \tau x) \,\di\tau 
                 =  \sum_{\lambda\in\Z}  B_{X}(u+ \lambda x).
                 \qedhere
\end{equation*}
\end{proof}

\begin{Remark}
 Lemma~\ref{Lemma:DiscreteEqualContinuous} is a
  statement on semi-discrete and continuous convolutions with the box spline.
A related result is in \cite{vergne-2011}.
\end{Remark}

The following lemma yields a deletion-contraction formula for the interior points of the zonotope.
See  Figure~\ref{Figure:DelConZonotopes}
 for an illustration.
\begin{Lemma}
\label{Lemma:ZonotopeDelCon}
 The following map is a bijection:
 \begin{align}
  \Zcal_-(X) \setminus \Zcal_-(X\setminus x) &\to \Zcal_-(X/x) \\
          z &\mapsto \bar z.
 \end{align}
\end{Lemma}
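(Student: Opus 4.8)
The plan is to show that $z \mapsto \bar z = z + \spa(x)$ restricts to a well-defined bijection between $\Zcal_-(X)\setminus\Zcal_-(X\setminus x)$ and $\Zcal_-(X/x)$. I would first verify that the map lands in the correct set: if $z\in\interior(Z(X))\cap\Lambda$, then $\bar z = \pi_x(z)$ lies in $\pi_x(\interior(Z(X)))\subseteq\interior(\pi_x(Z(X))) = \interior(Z(X/x))$ because $\pi_x$ is an open linear surjection and $Z(X/x)=\pi_x(Z(X))$; moreover $\bar z\in\pi_x(\Lambda)=\Lambda/x$, so $\bar z\in\Zcal_-(X/x)$. (Here total unimodularity is what guarantees $\Lambda/x$ is a lattice and $\Zcal_-(X/x)$ makes sense.) The real content is in the three remaining points: the domain $\Zcal_-(X)\setminus\Zcal_-(X\setminus x)$ is exactly the preimage of $\Zcal_-(X/x)$ intersected with $\Zcal_-(X)$; injectivity; and surjectivity.

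For injectivity and the structure of fibres, I would analyse, for a fixed $\bar u\in\Zcal_-(X/x)$, the set $L_{\bar u} := \{\lambda\in\R : u+\lambda x\in\interior(Z(X))\}$ where $u$ is any lift of $\bar u$. Since $Z(X) = Z(X\setminus x) + [0,1]x$, one sees $Z(X)\cap(u+\R x)$ is a segment of the form $u+[a,a+1]x$ for some $a$ (the fibre of the zonotope over the line $u+\R x$ has length exactly $\abs{x}$ in the $x$-direction), hence $L_{\bar u}$ is an open interval of length $1$. Therefore $L_{\bar u}$ contains exactly one integer \emph{unless} its endpoints are integers, in which case it contains none. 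This dichotomy is the crux. I would argue that the endpoints of $L_{\bar u}$ are integers precisely when $\bar u$ lies on the boundary of $Z(X/x)$ in a certain degenerate sense — but since $\bar u$ is an \emph{interior} lattice point of $Z(X/x)$, I expect to be able to rule this out, or rather to show that when it does happen the segment $u+(a,a+1)x$ still meets $\Lambda$ after perturbation. Actually the clean statement to aim for: for $\bar u\in\Zcal_-(X/x)$ there is exactly one $\lambda\in\Z$ with $u+\lambda x\in\interior(Z(X))$, and that point $z:=u+\lambda x$ satisfies $z\notin Z(X\setminus x)$ — giving both surjectivity and injectivity at once, and pinning down the domain as claimed.

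To see why such $z$ is not in $Z(X\setminus x)$: if $z\in\interior(Z(X))$ and also $z\in Z(X\setminus x)$, then since $z\in\interior(Z(X\setminus x)+[0,1]x)$ and $z\in Z(X\setminus x)$, a short argument with the fibre over $u+\R x$ shows $z$ would have to be an interior point of $Z(X\setminus x)$ as well (the segment $L_{\bar u}$ would then not be the maximal one, contradiction with its length being exactly $1$ and $z$ being the unique interior lattice point on it). Conversely, any $z\in\Zcal_-(X)\setminus\Zcal_-(X\setminus x)$ maps to some $\bar z\in\Zcal_-(X/x)$, and by the uniqueness just established it is the unique such preimage. I would organise the whole argument around the elementary one-dimensional fact that a half-open-looking open interval of length $1$ contains a unique integer iff its endpoints are non-integral, and the geometric fact $Z(X)=Z(X\setminus x)+[0,1]x$, translating interior-membership into strict inequalities.

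The main obstacle I anticipate is the boundary/endpoint bookkeeping: making precise, using total unimodularity, that for an \emph{interior} lattice coset $\bar u$ the interval $L_{\bar u}$ never has integral endpoints (or handling the case when a lift happens to sit on $\Hcal_X$), and cleanly separating ``interior of $Z(X)$'' from ``lies in $Z(X\setminus x)$'' in terms of the fibre segment. Everything else is routine once the fibre structure $Z(X)\cap(u+\R x)=u+[a,a+1]x$ is nailed down.
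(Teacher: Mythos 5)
Your central geometric claim is false, and the argument collapses with it. From $Z(X)=Z(X\setminus x)+[0,1]x$, the fibre of $Z(X)$ over a line $u+\R x$ is \emph{not} a segment $u+[a,a+1]x$: it equals $u+(I+[0,1])x$, where $I=\{t:u+tx\in Z(X\setminus x)\}$ is itself a closed interval, so its parameter length is $1+\abs{I}$, which is strictly larger than $1$ whenever the line meets $Z(X\setminus x)$ in more than one point --- and for $\bar u\in\interior(Z(X/x))$ with $x$ not a coloop this is always the case. Consequently your ``clean statement'' (for each $\bar u\in\Zcal_-(X/x)$ there is exactly one $\lambda\in\Z$ with $u+\lambda x\in\interior(Z(X))$) is wrong. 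Concretely, take $X=((1,0),(1,0),(0,1),(1,1))$ and $x=(1,0)$: then $Z(X)$ is the hexagon with vertices $(0,0),(2,0),(3,1),(3,2),(1,2),(0,1)$, so $\Zcal_-(X)=\{(1,1),(2,1)\}$, $\Zcal_-(X\setminus x)=\{(1,1)\}$, $\Zcal_-(X/x)=\{1\}$; over $\bar u=1$ the open fibre is $\{(\lambda,1):0<\lambda<3\}$ and contains \emph{two} lattice points, only one of which, $(2,1)$, lies in the domain of the map. So the bijection is not produced by uniqueness of interior lattice points on the fibre, but by the extra condition defining the domain, $z\notin\Zcal_-(X\setminus x)$, which on the fibre means $z+x\notin\interior(Z(X))$; your subsidiary argument that the (allegedly unique) interior lattice point automatically avoids $Z(X\setminus x)$ leans on the same false length-one picture. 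The integer-endpoint case you flag as the main obstacle is also not the right worry: the fix goes in the opposite direction, since for $\bar u\in\interior(Z(X/x))$ the fibre of the full-dimensional $Z(X\setminus x)$ has positive length, so the open interval has length $>1$ and always contains an integer.

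A repaired version of your route would run: pick the \emph{largest} integer $\lambda$ in the open fibre interval to get surjectivity, prove the equivalence ``$z\in\Zcal_-(X\setminus x)$ iff $z+x\in\Zcal_-(X)$'' to identify the domain fibrewise, and deduce injectivity from it together with convexity and the fact (total unimodularity) that two lattice points on a line in direction $x$ differ by an integer multiple of $x$. That equivalence and the injectivity argument are exactly what the paper does; where the two approaches genuinely differ is surjectivity, which the paper does not prove geometrically at all: it obtains $\abs{\Zcal_-(X)\setminus\Zcal_-(X\setminus x)}=\abs{\Zcal_-(X/x)}$ from the Tutte-polynomial evaluation \eqref{equation:TutteDimensionPointCount} and deletion--contraction, and then only needs injectivity. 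Your direct fibre analysis, once corrected as above, would avoid the Tutte-polynomial input, but as written the key claims it rests on are false.
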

\begin{proof}
It is obvious that $\bar z$ is contained in $\Zcal_-(X/x)$.
Using the fact that $\abs{\Zcal_-(X)}$ is an evaluation of the Tutte polynomial
(formula \eqref{equation:TutteDimensionPointCount}) and the deletion-contraction formula for the 
 Tutte polynomial, one can easily establish that  the domain and the range of the map   have the same cardinality.

 Hence it is sufficient to show that the map is injective.
 Let us prove this. First note that $z\in \Zcal_-(X)$ is contained $\Zcal_-(X\setminus x)$ if and only if 
 $z+x\in \Zcal_-(X)$. 
 Let $z_1,z_2\in \Zcal_-(X) \setminus \Zcal_-(X\setminus x)$ \st $\bar z_1=\bar z_2$. This implies that there is 
 a $\lambda\in \R$ \st $z_1=z_2 + \lambda x$.  Because of the total unimodularity, $\lambda$ must be 
  an integer. \WLOG\ $\lambda$ is non-negative.
  By convexity, $z_1,\ldots, z_1+x, \ldots, z_1 +\lambda x $ are contained in $\Zcal_-(X)$.
   By the observation at the beginning of this paragraph, $z_1 + x$ is not 
    contained in $\Zcal_-(X)$. This  implies $\lambda=0$. Hence the map is injective.
\end{proof}

\section{Exact sequences}
\label{Section:ExactSequences}
In this section we will prove the Main Theorem.
We start with a simple observation.
\begin{Remark}
\label{Remark:ColoopsOnly}
If $X$ contains a \emph{coloop}, \ie an element $x$ \st $\rank(X\setminus x)<\rank(X)$, then
 $\Zcal_-(X)=\emptyset$ 
and $\Pcal_-(X) = \{ 0\}$. Hence,
 Theorem~\ref{Theorem:weakHoltzRon} is trivially satisfied. 
 \end{Remark}
 We will consider the set
  $\Xi(X) := \{ f : \Lambda \to \R : \supp(f) \subseteq \Zcal_-(X)  \}$
and the map
\begin{align}
\begin{split}
\gamma_X : \Pcal_-(X)&\to \Xi(X) %
    \\
	      p &\mapsto \bigl[\: \Lambda \ni z \mapsto p(D) B_X(z) \: \bigr]  
\end{split}
	      \end{align}
\begin{figure}[htbp] 
\begin{center}
\input{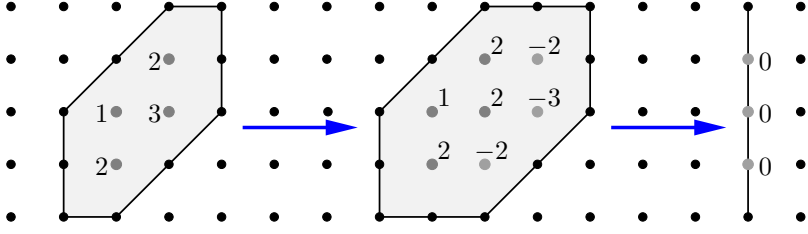}
\end{center}
\caption{Deletion and contraction for a zonotope and a function defined on the interior lattice points of
 the zonotope.
}

\label{Figure:DelConZonotopes}
\end{figure}
\begin{Proposition}
\label{Proposition:ExactSequences}
 Let $d\ge 2$ and let $\Lambda\subseteq U \cong \R^d$ be a lattice. 
 Let $X \subseteq \Lambda$ be a finite list of vectors that spans $U$ 
 and that is totally unimodular with respect to $\Lambda$. 
  Let $x\in X$ be a non-zero element \st $\rank(X\setminus x)=\rank(X)$. 
  
  Then
 the following diagram of real vector spaces is 
 commutative, the rows are exact and the vertical maps are isomorphisms:
\begin{align}
\xymatrix{
  0 \ar[r] &     \Pcal_-(X\setminus x) \ar[r]^{\cdot p_x} \ar[d]^{\gamma_{X\setminus x}}  
   & \Pcal_-(X) \ar[r]^{\sym(\pi_x)} \ar[d]^{\gamma_{X}}
  &  \Pcal_-(X/x )\ar[r] \ar[d]^{\gamma_{X / x}}  &  0
 \\
 0 \ar[r] & \Xi(X\setminus x) \ar[r]^{\nabla_x} &
     \Xi(X) \ar[r]^{\Sigma_x} &  \Xi(X/x) \ar[r]  &  0
  }
\end{align}
\begin{align}
\text{where } \nabla_x(f)(z) &:=  f(z) - f(z - x), \\
   \Sigma_x (f) (\bar z) &:=  \sum_{x\in\bar z \cap \Lambda } f(x)  = \sum_{\lambda\in \Z} f( \lambda x + z) \text{ for some } z\in \bar z.
\end{align}
\end{Proposition}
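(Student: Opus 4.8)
The plan is to verify the diagram piece by piece: first that the maps are well-defined, then commutativity, then exactness of the bottom row (the top row's exactness is implicitly part of the Holtz--Ron theory, but I will recall why), and finally conclude that the vertical maps are isomorphisms by a dimension/diagram-chase argument. The well-definedness of $\gamma_X$ lands in $\Xi(X)$ rather than an arbitrary function space: this is precisely the content that $p(D)B_X$ vanishes outside $Z(X)$ (clear, since $B_X$ is supported on $Z(X)$ and differentiation does not enlarge the support) and, more subtly, that it vanishes at lattice points on the boundary $\partial Z(X)$. The latter should follow from the known fact that $\dim \Pcal_-(X) = |\Zcal_-(X)|$ together with a degree/smoothness bound, or more directly from the wall-crossing analysis in Section~\ref{Section:Smoothness}; this boundary-vanishing is one of the two places I expect to need care.

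**Commutativity and the bottom row.** For the left square, I would compute $\gamma_X(p_x \cdot p) = (p_x p)(D) B_X = p(D)\,D_x B_X = p(D)(B_{X\setminus x} - B_{X\setminus x}(\cdot - x))$ using \eqref{eq:BoxSplineDiff}, which is exactly $\nabla_x(\gamma_{X\setminus x}(p))$ evaluated at lattice points. For the right square, applying $\sym(\pi_x)$ to $p$ kills the $s_r$-variable, so $(\sym(\pi_x)p)(D) B_{X/x}$ at a coset $\bar z$ should match $\Sigma_x(\gamma_X(p))(\bar z)$; the key input is Lemma~\ref{Lemma:DiscreteEqualContinuous}, $B_{X/x}(\bar u) = \sum_{\lambda\in\Z} B_X(u+\lambda x)$, differentiated in the directions transverse to $x$ — one must check that $p(D)$ commutes appropriately with the semi-discrete summation, which is legitimate since the sum is locally finite ($B_X$ has compact support). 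For exactness of the bottom row: $\nabla_x$ is injective on $\Xi(X\setminus x)$ because if $f$ is supported on $\Zcal_-(X\setminus x)$ and $\nabla_x f = 0$ then $f$ is constant along $x$-lines hence zero by compactness; $\Sigma_x$ is surjective essentially by Lemma~\ref{Lemma:ZonotopeDelCon}, which gives a bijection $\Zcal_-(X)\setminus\Zcal_-(X\setminus x) \to \Zcal_-(X/x)$ allowing one to build a preimage supported on that difference set; and $\image(\nabla_x) = \ker(\Sigma_x)$ can be checked directly (a function killed by the column-sum operator is a $\nabla_x$ of the partial sums, and one uses Lemma~\ref{Lemma:ZonotopeDelCon} again to see the support lands in $\Zcal_-(X\setminus x)$).

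**Conclusion via dimension count.** Once both rows are exact and the diagram commutes, I would argue by induction on $|X|$: the base case $d=1$ is Proposition~\ref{Proposition:MainThmOneD}, and the inductive hypothesis gives that $\gamma_{X\setminus x}$ and $\gamma_{X/x}$ are isomorphisms (both have strictly fewer vectors, and $X/x$ has lower rank). The five lemma then forces $\gamma_X$ to be an isomorphism. A mild wrinkle: the inductive hypothesis for $\gamma_{X/x}$ requires $X/x$ to still span $U/x$ and to be totally unimodular, which is recorded in Section~\ref{Section:DeletionContraction}; and one needs to handle the degenerate case where $X\setminus x$ no longer spans, i.e. $x$ is a coloop — but that is excluded by the hypothesis $\rank(X\setminus x)=\rank(X)$. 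Note this also re-proves the Main Theorem, since $\gamma_X$ being an isomorphism is exactly the assertion that every $f\in\Xi(X)$ has a unique preimage $p\in\Pcal_-(X)$ with $p(D)B_X|_{\Zcal_-(X)} = f$.

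**Expected main obstacle.** The hardest part will be the boundary-vanishing needed for $\gamma_X$ to land in $\Xi(X)$ — i.e. that $p(D)B_X$ is zero at lattice points on $\partial Z(X)$ for every $p\in\Pcal_-(X)$ — and, relatedly, making the commutativity of the right square rigorous at boundary cosets where the semi-discrete sum in Lemma~\ref{Lemma:DiscreteEqualContinuous} involves points on $\partial Z(X)$. I would address both by leaning on the smoothness established in Proposition~\ref{Proposition:WeakHRwellDefined} (so that evaluation at boundary points is unambiguous) together with the support and degree properties of $B_X$; alternatively, one can sidestep the boundary issue entirely by first proving injectivity of $\gamma_X$ on all of $\Pcal_-(X)$ (via the diagram) and then invoking $\dim\Pcal_-(X) = |\Zcal_-(X)| = \dim\Xi(X)$ to get surjectivity and the correct codomain for free.
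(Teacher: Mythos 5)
Your outline follows the paper's own route almost exactly: commutativity of the left square via $D_xB_X=\nabla_xB_{X\setminus x}$, commutativity of the right square via Lemma~\ref{Lemma:DiscreteEqualContinuous}, exactness of the bottom row from Lemma~\ref{Lemma:ZonotopeDelCon}, and then induction plus the five lemma with the rank-one case (Proposition~\ref{Proposition:MainThmOneD}) as a base case. Two of your side concerns are in fact easier than you suggest: the boundary-vanishing needed for $\gamma_X$ to land in $\Xi(X)$ is immediate, since $p(D)B_X$ is continuous by Proposition~\ref{Proposition:WeakHRwellDefined} and is supported in the full-dimensional convex body $Z(X)$, so it vanishes on $\partial Z(X)$ by approximation from the exterior; and in the induction you should also allow the trivial base case where the list consists only of coloops and zeroes (Remark~\ref{Remark:ColoopsOnly}), since $X\setminus x$ may contain coloops even though $x$ itself is not one --- the hypothesis $\rank(X\setminus x)=\rank(X)$ does not exclude this.

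The one genuine gap is the exactness of the top row. You assert that it is ``implicitly part of the Holtz--Ron theory'' and promise to recall why, but never do; it is not implicit in anything you have available. The short exact sequence
\begin{equation*}
0 \to \Pcal_-(X\setminus x) \xrightarrow{\;\cdot p_x\;} \Pcal_-(X) \xrightarrow{\;\sym(\pi_x)\;} \Pcal_-(X/x) \to 0
\end{equation*}
is a nontrivial theorem in its own right (in particular surjectivity of $\sym(\pi_x)$ and the identification of its kernel with $p_x\cdot\Pcal_-(X\setminus x)$); the paper does not prove it either but cites it explicitly, namely the statement in \cite{ardila-postnikov-2009} and the proof in \cite{lenz-hzpi-2012}, which uses the description of $\Pcal_-(X)$ as the kernel of a power ideal. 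Without that input (or an argument replacing it), your proof of the proposition is incomplete, since the exactness of the top row is both part of the statement and the ingredient that makes the five-lemma step legitimate. Everything else in your proposal is sound and matches the paper's argument.
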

\begin{proof}%

\emph{Commutativity of the left square:}
Let $z\in \Zcal_-(X)$ and let $p\in \Pcal_-(X\setminus x)$.
By \eqref{eq:BoxSplineDiff},
 $(p\cdot p_x)(D)B_X(z) = \nabla_x (p(D)B_{X\setminus x})(z)$.
Hence $\gamma_X \circ (\cdot p_x) = \nabla_x  \circ \gamma_{X\setminus x}$.

\emph{Commutativity of the right square:}
Let $\bar z\in \Zcal_-(X/x)$, $f\in\Pcal_-(X)$ and let $z\in U$ be a representative of $\bar z$.
 Then
 \begin{equation*}
 \sum_{\lambda \in \Z} p(D) B_X( \lambda x + z)  
  = p(D)\int_\R B_{X\setminus x}(u+ \tau x) \,\di \tau 
  = \sym(\pi_x)(p)(D)B_{X/x}(\bar z)
 \end{equation*}
 because of Lemma~\ref{Lemma:DiscreteEqualContinuous} and the fact that
 applying a differential operator to a function that is constant on a subspace is the same
  as applying the projection of the differential operator  to
   the projection of the function.
Hence $\gamma_{X/x}\circ \sym(\pi_x) = \Sigma_x \circ \gamma_{X}$.

\emph{Exactness:}
Exactness of the first row was  stated in \cite{ardila-postnikov-2009} and proven in \cite{lenz-hzpi-2012}.
 The proof relies on the fact that $\Pcal_-(X)$ can be written as the kernel of a power ideal.
Exactness of the second row is easy to check 
taking into account
Lemma~\ref{Lemma:ZonotopeDelCon}.

\emph{Isomorphisms:}
 By induction over the number of non-zero elements in $X$,  $\gamma_{X\setminus x}$ and $\gamma_{X / x}$ are isomorphisms. 
 Then $\gamma_X$ is also an isomorphism by the five lemma.

Two base cases have to be considered: %
 by deleting elements from $X$, it may happen that $X$ eventually contains only coloops and zeroes. 
  This case is trivial (cf.~Remark~\ref{Remark:ColoopsOnly}).

 By contracting elements from $X$, it may happen that $X$ has rank $1$. We have shown in
  Section~\ref{Section:CardinalBSplines} that
 $\Pcal_-(X)$ and $\Xi(X)$ are isomorphic in this case.
\end{proof}

\begin{proof}[Proof of the Main Theorem]%
The theorem is equivalent 
 to $\gamma_X$ being an isomorphism, which is part of 
Proposition~\ref{Proposition:ExactSequences}.
\end{proof}

\renewcommand{\MR}[1]{} 

\bibliographystyle{amsplain}
\bibliography{../../MasonsConjecture/Mason_Literatur}

\end{document}